\newtheorem{theorem}{Theorem}[section]
\newtheorem{thmy}{Theorem}
\newtheorem{lemma}[theorem]{Lemma}
\newtheorem{corollary}[theorem]{Corollary}
\def\barr{\begin{array}}
\def\earr{\end{array}}
\title{Another criterion for supersolvability\\ of finite groups}
\author{Marius T\u arn\u auceanu}
\date{January 13, 2022}
\begin{document}

\maketitle

\begin{abstract}
Let $o(G)$ be the average order of a finite group $G$. In this paper, we prove that if $o(G)<\frac{31}{12}$\,, then $G$ is
supersolvable. Moreover, we have $o(G)=\frac{31}{12}$ if and only if $G\cong A_4$. We also classify finite groups $G$
satisfying $o(G)<\frac{31}{12}$\,.
\end{abstract}

{\small
\noindent
{\bf MSC2000\,:} Primary 20D60; Secondary 20D10, 20F16.

\noindent
{\bf Key words\,:} average order, sum of element orders, supersolvable group.}

\section{Introduction}
Given a finite group $G$, we denote by $\psi(G)$ the sum of element orders of $G$ and by $o(G)$ the average order of $G$, that is
\begin{equation}
\psi(G)=\sum\limits_{x\in G}o(x) \mbox{ and } o(G)=\frac{\psi(G)}{|G|}\,.\nonumber
\end{equation}

In the last years there has been a growing interest in studying the pro\-per\-ties of these functions and their relations with the structure of $G$ (see for example \cite{1}-\cite{4}, \cite{6}-\cite{8}, \cite{10}-\cite{11}, \cite{14} and \cite{18}-\cite{19}).

In \cite{10}, A. Jaikin-Zapirain uses the average order to determine a lower bound for the number of conjugacy classes of a finite $p$-group/nilpotent group. He also suggests the following question: "\textit{Let $G$ be a finite {\rm(}$p$-{\rm)}group and $N$ be a normal {\rm(}abelian{\rm)} subgroup of $G$. Is it true that $o(G)\geq o(N)^{\frac{1}{2}}$}\,\,?\,". Recently, E.I. Khukhro, A. Moret\' o and M. Zarrin proved the following result (see Theorem 1.2 of \cite{11}):

\begin{thmy}
Let $c>0$ be a real number and $p\geq \frac{3}{c}$ be a prime. Then there exists a finite $p$-group $G$ with a normal abelian subgroup $N$ such that $o(G)<o(N)^c$.
\end{thmy}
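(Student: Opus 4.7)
The plan is to construct an explicit family of $p$-groups $G$ with a normal abelian subgroup $N$, parametrised by an integer $n$, and to show that $o(G)<o(N)^{c}$ holds once $n$ is taken large enough in terms of $c$ and $p$.

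First I would record a trivial lower bound. Every non-identity element of a finite $p$-group has order at least $p$, so
\[
 o(G) \geq p - \frac{p-1}{|G|} > p-1,
\]
and consequently the inequality $o(G)<o(N)^{c}$ forces $o(N)>(p-1)^{1/c}$. Since $o(N)\leq \exp(N)$, $N$ must have exponent at least of order $p^{1/c}$, which motivates the choice $N\cong C_{p^{n}}$ with $n$ slightly larger than $1/c$. A direct computation gives $o(C_{p^{n}})=(p^{2n+1}+1)/(p^{n}(p+1))$, so $o(N)\approx p^{n+1}/(p+1)$; by increasing $n$ we can make $o(N)^{c}$ as large relative to $p$ as we like, and the numerical hypothesis $p\geq 3/c$ will translate into exactly the slack needed for such an $n$ to exist.

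Next I would try to realise $G$ as a semidirect product $G=N\rtimes H$ in which $H$ is a large $p$-group acting on $N$ through the order-$p$ subgroup of $\mathrm{Aut}(C_{p^{n}})$ generated by $x\mapsto (1+p^{n-1})x$. Adjoining central factors acting trivially makes $[G:N]=|H|$ arbitrarily large while keeping $G$ a $p$-group. Orders of elements of $G$ are controlled by
\[
 (a,h)^{p} = \Bigl(\sum_{j=0}^{p-1} h^{j}(a),\, h^{p}\Bigr),
\]
which in this setup reduces to $(pa,1)$ for odd $p$. One then splits $\psi(G)$ over the cosets of $N$ to obtain an explicit formula whose dominant term one compares with $o(N)^{c}$.

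The main obstacle is delicate. With the naive semidirect product above, each element $(a,h)$ turns out to have order $\max(p,\mathrm{ord}(a))$, giving $\psi(G)\approx |H|\,\psi(N)$ and thus $o(G)\approx o(N)$; this yields $o(G)<o(N)^{c}$ only for $c\geq 1$. To cover all $c>0$ one must force many elements of $G\setminus N$ to have order exactly $p$ even when their $N$-component has large order, and this is impossible for cyclic $N$ because any order-$p$ automorphism of $C_{p^{n}}$ has non-vanishing norm $\sum_{j}\alpha^{j}$. Overcoming this -- by allowing $N$ of mixed rank over $\mathbb{Z}/p^{n}$, or by a more intricate wreath-product style construction in which a non-trivial symplectic or trace-zero action of $H$ on $N$ collapses the orders of most lifts -- is the technical core of the argument of \cite{11}, and is where the quantitative hypothesis $p\geq 3/c$ would enter.
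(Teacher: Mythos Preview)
This statement is not proved in the present paper: Theorem~A is simply quoted from Khukhro, Moret\'o and Zarrin~\cite{11} (their Theorem~1.2), and no argument for it is reproduced. There is therefore nothing in this paper against which to compare your attempt.

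Regarding the proposal on its own terms, it is not a proof, and you say so yourself. Your analysis of the cyclic case is correct: if $N=C_{p^{n}}$ and $h$ acts on $N$ through an automorphism $\alpha$ of order~$p$, then the norm $1+\alpha+\cdots+\alpha^{p-1}$ is multiplication by $p$ times a unit of $\mathbb{Z}/p^{n}\mathbb{Z}$, so $(a,h)^{p}$ has $N$-component of order $o(a)/p$ when $o(a)>1$, and hence $o((a,h))=\max\{p,o(a)\}$. Summing over any nontrivial coset of $N$ therefore contributes at least $\psi(N)$ to $\psi(G)$, giving $o(G)\geq o(N)$ for every extension of this shape; the desired inequality $o(G)<o(N)^{c}$ then fails for all $c<1$. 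Your final paragraph does not resolve this but hands the problem back to~\cite{11}. The missing idea is exactly the one you flag: one must take $N$ of rank greater than~$1$ and arrange an action whose norm map has large kernel, so that the vast majority of elements outside $N$ genuinely have order~$p$ while $o(N)$ remains of size roughly $p^{n}$. Until such a construction is actually written down and $o(G)$ estimated against $o(N)^{c}$, with the hypothesis $p\geq 3/c$ entering that estimate, nothing has been proved.
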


Note that Theorem A provides a negative answer to Jaikin-Zapirain's question even if we replace the exponent $\frac{1}{2}$ with any positive real number $c$. In the same paper \cite{11}, the authors posed the following conjecture:

\bigskip\noindent{\bf Conjecture.} {\it Let $G$ be a finite group and suppose that $o(G)<\frac{211}{60}=o(A_5)$. Then $G$ is solvable.}
\bigskip

This has been confirmed by M. Herzog, P. Longobardi and M. Maj \cite{8}.

\begin{thmy}
Let $G$ be a finite group. If $o(G)<\frac{211}{60}$\,, then $G$ is solvable. Moreover, we have $o(G)=\frac{211}{60}$ if and only if $G\cong A_5$.
\end{thmy}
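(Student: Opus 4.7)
My plan is a minimal-counterexample argument that reduces the theorem to a uniform lower bound on $o(S)$ for non-abelian finite simple groups $S$, the latter to be established via the classification of finite simple groups (CFSG). Suppose for contradiction that $G$ is a finite non-solvable group with $o(G) \le 211/60$ of minimum order; I aim to show $G \cong A_5$. This will simultaneously handle the contradiction in the strict-inequality part of the statement and the non-solvable equality case, after which a small separate observation excludes any solvable group from attaining $o(G) = 211/60$.

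First I would set up elementary machinery for $\psi$ and $o$ under subgroups and quotients. The bound $\psi(G) \ge \psi(H)$ for $H \le G$, together with the multiplicativity $\psi(N \times M) = \psi(N)\psi(M)$ when $\gcd(|N|,|M|)=1$, are the main levers. Writing $R = R(G)$ for the solvable radical, I would push the problem to $G/R$ and show that $\mathrm{Soc}(G/R)$ is a direct product $S^k$ of copies of a single non-abelian finite simple group $S$; using multiplicativity and the fact that $o(S) > 1$, I would force $k = 1$. Minimality of $|G|$ then constrains $G$ to be essentially simple, with any non-trivial $R$ or outer-automorphism contribution pushing $o(G)$ strictly above $o(G/R)$ by a direct computation on the extension.

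The crux is the uniform inequality $o(S) \ge 211/60$ for every non-abelian finite simple group $S$, with equality iff $S \cong A_5$. Via CFSG this splits into three blocks. For alternating groups $A_n$ with $n \ge 6$, an induction on $n$ based on the cycle-type decomposition of $\psi(A_n)$ suffices, since adjoining longer cycles raises the average order. For the $26$ sporadic simple groups, direct verification from the ATLAS character tables is routine. The bulk of the work lies in the groups of Lie type: here I would exploit that in a generic maximal torus $T$ of $S$, regular semisimple elements have order close to $|T|$ and account for a positive proportion of $|S|$, yielding $\psi(S)/|S| > 211/60$ for all but finitely many small cases; the remaining small-rank-over-small-field examples such as $\mathrm{PSL}_2(7)$, $\mathrm{PSL}_2(8)$, $\mathrm{PSL}_2(11)$ and $\mathrm{PSL}_3(2)$ are then checked individually.

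The principal obstacle is precisely this Lie-type uniform bound, since the torus estimates are loosest for rank-one groups over the smallest fields and there one is competing with $A_5 = \mathrm{PSL}_2(5)$, which attains the bound. Once that step is secured, the reduction back to $G$ is essentially formal: it pins down $G \cong A_5$, and any putative solvable $G$ with $o(G) = 211/60$ is ruled out by a quick rationality/divisibility argument ($60 \mid |G|$ and $\psi(G) = 211|G|/60$ forces structural constraints incompatible with solvability), completing the proof.
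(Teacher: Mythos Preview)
The paper does not prove this statement: Theorem~B is quoted from Herzog, Longobardi and Maj \cite{8} and invoked as a black box in the proof of Theorem~1.1. There is therefore no argument in the present paper against which to compare your outline.

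On the outline itself, the genuine gap is the reduction you describe as ``essentially formal''. Passing to $G/R(G)$ is fine by property~(2), and minimality then yields $R(G)=1$ together with a unique minimal normal subgroup $N\cong S^{k}$. But from this point on your stated levers fail. The multiplicativity $o(A\times B)=o(A)\,o(B)$ requires coprime orders and so does not apply to $S^{k}$; it cannot force $k=1$. And once you are in the range $S\trianglelefteq G\le\mathrm{Aut}(S)$ (or more generally $S^{k}\trianglelefteq G$), minimality gives nothing further, because every proper quotient of $G$ is solvable while $S$ and $S^{k}$ are \emph{subgroups}, and there is no inequality $o(G)\ge o(N)$ for normal subgroups in general (already $o(S_3)=\tfrac{13}{6}<\tfrac{7}{3}=o(C_3)$; cf.\ also Theorem~A of the paper). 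Hence even a complete proof of the uniform bound $o(S)\ge\tfrac{211}{60}$ for all non-abelian simple $S$ would not, via your sketch, deliver $o(G)\ge\tfrac{211}{60}$ for the $S^{k}$-by-solvable group that your reduction actually produces. Bridging this --- controlling $o(G)$ in terms of $o(S)$ for such extensions --- is precisely the substantive step, and your proposal does not engage with it. The solvable equality case is likewise underspecified: $60\mid|G|$ does follow from $\gcd(211,60)=1$, but plenty of solvable groups have order divisible by $60$, so ``structural constraints incompatible with solvability'' still needs an actual mechanism.
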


Inspired by these results, we came up with the following new criterion for supersolvability of finite groups.

\begin{theorem}
Let $G$ be a finite group. If $o(G)<\frac{31}{12}$\,, then $G$ is supersolvable. Moreover, we have $o(G)=\frac{31}{12}$ if and only if $G\cong A_4$.
\end{theorem}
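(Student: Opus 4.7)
I would first invoke Theorem B: since $\frac{31}{12} = \frac{155}{60} < \frac{211}{60}$, any finite group with $o(G) \leq \frac{31}{12}$ is solvable. So it suffices to prove: for every finite solvable non-supersolvable group $G$, $o(G) \geq \frac{31}{12}$, with equality if and only if $G \cong A_4$. The direct calculation $\psi(A_4) = 1 + 3\cdot 2 + 8\cdot 3 = 31$ yields $o(A_4) = \frac{31}{12}$ and provides the base case. The main technical tool throughout is the monotonicity $o(G) \geq o(G/N)$ for $N \trianglelefteq G$, proved by summing the elementwise inequality $o(g) \geq o(gN)$ over cosets; moreover this inequality is strict whenever $N \neq 1$, since any $1 \neq x \in N$ satisfies $o(x) > 1 = o(xN)$.

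I would then induct on $|G|$. Let $G$ be a minimal counterexample. Any proper nontrivial quotient $G/N$ must be supersolvable, for otherwise the inductive hypothesis combined with strict monotonicity forces $o(G) > o(G/N) \geq \frac{31}{12}$, contradicting $o(G) \leq \frac{31}{12}$. Two distinct minimal normal subgroups $N_1, N_2$ would embed $G$ in the supersolvable $G/N_1 \times G/N_2$, contradicting non-supersolvability of $G$. Hence $G$ admits a unique minimal normal subgroup $N$, elementary abelian of order $p^k$ (by solvability). If $k=1$, the cyclic $N$ together with a chief series of the supersolvable $G/N$ yields a chief series of $G$ with all factors cyclic of prime order, so $G$ would be supersolvable. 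Hence $k \geq 2$, and $G/C_G(N)$ is a supersolvable subgroup of $GL_k(\mathbb{F}_p)$ acting faithfully and irreducibly on $N$.

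The main obstacle is the final verification that $o(G) \geq \frac{31}{12}$ in this structural setting, with equality only for $G \cong A_4$. The core lower bound I would use is
\[
\psi(G) \;\geq\; \psi(N) + |N|\bigl(\psi(G/N) - 1\bigr) \;=\; |N|\,\psi(G/N) + (|N|-1)(p-1),
\]
combining $\psi(N) = 1 + (|N|-1)p$ with $o(g) \geq o(gN)$ on each nontrivial coset. In the critical case $(p,k)=(2,2)$, the supersolvable irreducible subgroups of $GL_2(\mathbb{F}_2) = S_3$ are exactly $C_3$ and $S_3$; the faithful case $C_G(N) = N$ yields $G \in \{A_4, S_4\}$ with $o(A_4) = \frac{31}{12}$ and $o(S_4) = \frac{67}{24} > \frac{31}{12}$, and the non-faithful subcase $C_G(N) > N$ together with any non-split extensions is dispatched by direct computation. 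For the remaining $(p,k)$ with $p^k \geq 5$, supersolvable irreducibility forces $|G/C_G(N)|$ to be at least the order of a Singer-type cyclic subgroup of $GL_k(\mathbb{F}_p)$, making the displayed lower bound --- refined when necessary by accounting for fixed-space contributions in each coset --- strictly exceed $\frac{31}{12}$. I expect the detailed bookkeeping across these cases, particularly non-split extensions and low-order $(p,k)$, to be the principal technical work.
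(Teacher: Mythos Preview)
Your opening reduction matches the paper's: a minimal counterexample is solvable by Theorem~B, all proper quotients are supersolvable, and there is a unique minimal normal subgroup $N\cong C_p^k$ with $k\ge 2$. But you omit a step that the paper makes essential use of and that would prune your case tree drastically: a minimal counterexample has $\Phi(G)=1$ (because supersolvability lifts through the Frattini quotient), whence $F(G)=N$, $C_G(N)=N$, and $G$ splits as $N\rtimes M$ with $M\cong G/N$ acting faithfully and irreducibly. The paper imports this from Theorem~C (Robinson--Wilson). Without it, your ``non-faithful subcase $C_G(N)>N$'' and ``non-split extensions'' are not a finite list to be ``dispatched by direct computation''; they are an unbounded family, and your plan gives no mechanism for handling them.

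Beyond that, your endgame diverges from the paper and contains a concrete error. Your assertion that ``supersolvable irreducibility forces $|G/C_G(N)|$ to be at least the order of a Singer-type cyclic subgroup of $GL_k(\mathbb{F}_p)$'' is false as stated: $C_4$ acts faithfully and irreducibly on $\mathbb{F}_3^{\,2}$, yet $4<8=|\text{Singer cycle}|$. The paper avoids any such representation-theoretic bound. Instead it first proves Lemma~2.2, which classifies \emph{all} supersolvable groups with $o<\tfrac{31}{12}$ (either a $2$-group with $G'=\Phi(G)$, or $C_3$, or a Frobenius group $C_3^m\rtimes C_2$); this pins $M\cong G/N$ down to three shapes. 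The contradictions then come not from your coset bound $\psi(G)\ge |N|\,\psi(G/N)+(|N|-1)(p-1)$ but from the counting inequality of Lemma~2.1 combined with Laffey's upper bounds on $n_p(G)$ and $n_3$ in $3$-groups, Brandl--Shi's $\pi_e=\{1,2,3\}$ classification, and the author's bound on $s_{2^k}$ in $2$-groups (Theorem~D). Your coset bound happens to be sharp for $A_4$ and even for $C_3^{\,2}\rtimes D_8$, but you have not shown it suffices across all cases, and without Lemma~2.2 (or an equivalent restriction on $M$) the ``remaining $(p,k)$'' are not under control.
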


Theorem 1.1 also leads to a classification of finite groups $G$ with $o(G)<\frac{31}{12}$\,, modulo $2$-groups.

\begin{theorem}
Let $G$ be a finite group satisfying $o(G)<\frac{31}{12}$ and $n_2(G)$ be the number of elements of order $2$ in $G$. Then one of the following statements holds:
\begin{itemize}
\item[{\rm a)}] $G$ is a $2$-group with $G'=\Phi(G)$ and $n_2(G)>\frac{17}{24}\,|G|-\frac{3}{2}$\,;
\item[{\rm b)}] $G\cong C_3$;
\item[{\rm c)}] $G\cong K\rtimes H$ is a Frobenius group whose kernel $K$ is an elementary abelian $3$-group and the complement $H$ is cyclic of order $2$; moreover, in this case we have
\begin{equation}
o(G)=\frac{5\cdot 3^m-2}{2\cdot 3^m}\,,\nonumber
\end{equation}where $|K|=3^m$.
\end{itemize}
\end{theorem}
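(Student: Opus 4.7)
By Theorem 1.1, $G$ is supersolvable (and $G \not\cong A_4$). I would rely on two basic tools throughout: the monotonicity $o(G) \geq o(G/N)$ for each normal subgroup $N \trianglelefteq G$, which follows from $o(xN) \mid o(x)$, and the elementary inequality $\psi(G) \geq 4|G| - 3 - 2 n_2(G) - n_3(G)$, obtained by estimating every element of order $\geq 4$ by $4$. Combined with $o(G) < 31/12$, the latter yields
\begin{equation*}
2 n_2(G) + n_3(G) > \tfrac{17}{12}|G| - 3.
\end{equation*}

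The easier cases dispose of (a) and (b). If $|G|$ is odd, every non-identity element has order $\geq 3$, so $\psi(G) \geq 3|G| - 2$ and $o(G) \geq 3 - 2/|G|$; comparison with $31/12$ forces $|G| \leq 4$, leaving only the trivial group (swept into (a)) or $G \cong C_3$ (case (b)). If $|G|$ is a power of $2$, then $n_3(G) = 0$ and the displayed inequality is exactly the asserted bound $n_2(G) > \tfrac{17}{24}|G| - \tfrac{3}{2}$ of (a); to obtain the identity $G' = \Phi(G)$, I would use $\Phi(G) = G' G^2$ and observe that a violation would give $G$ a quotient isomorphic to $C_4$, contradicting $o(G) < 31/12$ via $o(C_4) = 11/4 > 31/12$.

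The substantive remaining case---$|G|$ even with an odd prime divisor---I would handle by induction on $|G|$. Let $p$ be the largest prime divisor of $|G|$; supersolvability makes the Sylow $p$-subgroup $P$ normal, and Schur--Zassenhaus gives $G = P \rtimes Q$ for a Hall $p'$-complement $Q$. Two standard reductions then apply: since $\Phi(P)$ is characteristic in $P$ (and hence normal in $G$), passage to $G/\Phi(P)$ lets me assume $P$ is elementary abelian; then, since $K := C_Q(P) \trianglelefteq G$, passage to $G/K$ lets me assume the conjugation action of $Q$ on $P$ is faithful. The induction closes because a smaller quotient lying in (a), (b), or (c) still has order divisible by $p$: for $p \geq 5$ none of the three structures tolerates this, and for $p = 3$ only (c) does, yielding $Q/K \cong C_2$ acting by inversion.

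With $P \cong C_p^a$ and $Q$ acting faithfully, decomposing each coset $Pq$ according to the fixed-space dimension $a'(q) := \dim_{\mathbb{F}_p}\ker(\rho(q) - I)$ yields the exact formula
\begin{equation*}
\psi(G) = \sum_{q \in Q} o(q)\, p^{a - a'(q)} \bigl(p^{a'(q)+1} - p + 1\bigr).
\end{equation*}
Combined with $o(Q) \geq 3/2$ (a supersolvable group of even order has a $C_2$ quotient), this lower bound is minimised at the generalized dihedral configuration and gives $o(G) \geq (p^2 + p + 1)/(2p)$. For $p \geq 5$ the right-hand side is $\geq 31/10 > 31/12$, a contradiction; hence $p = 3$. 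In that case the reduction pins $G$ down as $C_3^m \rtimes C_2$ with inversion, and the same formula reproduces $o(G) = (5 \cdot 3^m - 2)/(2 \cdot 3^m)$, placing $G$ in (c). The main obstacle is this final quantitative step: proving the uniform lower bound $(p^2+p+1)/(2p)$ across \emph{all} faithful semidirect products in the $p \geq 5$ elimination, and excluding $K \neq 1$ in the $p = 3$ identification via an involution count forcing $o(G) \geq 31/12$; both require a careful separation of the Frobenius (fixed-point-free) contributions from the mixed ones.
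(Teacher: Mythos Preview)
Your handling of the odd-order case and the $2$-group case is correct and essentially matches the paper. The substantive divergence, and the gap, is in the remaining case (even order with an odd prime divisor).

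Your two ``reductions'' do different work in the two subcases, and only one of them actually closes. For $p\geq 5$ the induction does force $\Phi(P)=1$ and $K=1$ (a proper quotient in (a)--(c) cannot have order divisible by~$p$), so you legitimately arrive at $P$ elementary abelian with $Q$ acting faithfully. But the asserted bound $o(G)\geq (p^2+p+1)/(2p)$ does \emph{not} follow from your coset formula together with $o(Q)\geq 3/2$. Your formula yields the Frobenius lower bound
\[
o(G)\ \geq\ o(Q)+\frac{(p-1)(p^a-1)}{|Q|\,p^a},
\]
and by induction $Q$ itself lies in (a) or (c). Taking, say, $Q=D_8\times C_2^n$ (which is in (a)) or $Q=C_3^m\rtimes C_2$ (which is in (c)) with $|Q|$ large makes the second term negligible while $o(Q)\to 5/2<31/12$; so the generalized-dihedral value is not the minimum of this lower bound over admissible $Q$. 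What rescues the situation is that such $Q$ cannot act fixed-point-freely, so many $a'(q)>0$ and the true $o(G)$ is larger---but turning this into a usable inequality is precisely the ``careful separation'' you mention and do not carry out.

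For $p=3$ the gap is structural rather than quantitative: passing to $G/\Phi(P)$ only tells you that \emph{the quotient} is in (c), hence $Q\cong C_2$; it does not show $\Phi(P)=1$ in $G$. You still have to exclude a non-elementary $3$-group $P$ with an order-$2$ automorphism inducing inversion on $P/\Phi(P)$, and your plan offers no mechanism for this. The paper proceeds quite differently here: after pinning down $H\cong C_2$ by a chain of quotient arguments (reducing to $C_{p_r}\rtimes C_2^q$ and computing $o$ to force $p_r=3$, $q=1$), it uses Laffey's bound $n_3(K)\leq\frac{7}{9}\,|K|-1$ for $3$-groups of exponent $>3$ to force $\exp(K)=3$, and then invokes the Brandl--Shi classification of groups with $\pi_e=\{1,2,3\}$ to obtain the Frobenius structure. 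Your approach is more self-contained in spirit, but as written it does not replace these two external inputs.
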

\smallskip

For the proof of our results, we need the following two theorems. Recall that a \textit{just non-supersolvable group} is a solvable group which is not supersolvable, but all of whose proper quotients are supersolvable.

\begin{thmy}
{\rm (D.J.S. Robinson and J.S. Wilson \cite{15})} Let $G$ be a finite just non-supersolvable group. Then $G$ splits over its Fitting subgroup $A$ and all complements of $A$ are conjugate. Moreover, $A$ is abelian and noncyclic, $Q=G/A$ is supersolvable, and $A$ is faithful and simple as a $Q$-module.

Conversely, any extension by a finite supersolvable group $Q$ of a faithful simple $Q$-module which is not $\mathbb{Z}$-cyclic is a finite just non-supersolvable group.
\end{thmy}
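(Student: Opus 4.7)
The plan is to build up the forward direction in three stages: a unique minimal normal subgroup, identification with the Fitting subgroup, and the split-extension structure with conjugate complements; the converse is a verification. First, if $N$ is any minimal normal subgroup of $G$, then $G/N$ is supersolvable by hypothesis. Since $G$ is not supersolvable but supersolvability lifts through a normal subgroup of prime order (lift a chief series of $G/N$ and prepend $1\trianglelefteq N$), $N$ must be elementary abelian of rank $\geq 2$, hence non-cyclic. If $G$ had two distinct minimal normal subgroups $N_1,N_2$, then $N_1\cap N_2=1$ would embed $G$ into $G/N_1\times G/N_2$, a supersolvable direct product; contradiction. Thus $G$ has a unique minimal normal subgroup $N$.

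Second, I would invoke Huppert's theorem (for solvable $G$, supersolvability of $G/\Phi(G)$ implies that of $G$) to deduce $\Phi(G)=1$, then apply the Gasch\"utz identity $F(G)/\Phi(G)=\mathrm{soc}(G/\Phi(G))$ to get $F(G)=\mathrm{soc}(G)=N$. So $A:=F(G)=N$ is elementary abelian and non-cyclic. Because $\Phi(G)=1$, some maximal subgroup $M$ does not contain $A$; then $G=MA$, and $M\cap A\trianglelefteq G$ since $A$ is abelian and $M$ normalizes $M\cap A$. Minimality of $A$ forces $M\cap A=1$, so $M$ is a complement. Faithfulness of the action of $Q:=G/A$ on $A$ follows from $C_G(F(G))\subseteq F(G)$ (valid for solvable $G$) combined with the abelianness of $A$: $C_G(A)=A$. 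Simplicity of $A$ as an $\mathbb{F}_p[Q]$-module is precisely the minimality of $A$ as a normal subgroup of $G$, and supersolvability of $Q=G/A$ is immediate from the hypothesis on proper quotients.

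The main obstacle is the conjugacy of complements, because $|A|$ and $|Q|$ need not be coprime; I would tackle it via Gasch\"utz's theorem on complements to an abelian normal subgroup (or equivalently by a vanishing-$H^1(Q,A)$ argument exploiting the faithful simple $Q$-module structure of $A$, which ensures that the $1$-cocycles coming from the semidirect product are all coboundaries). For the converse, take $G=A\rtimes Q$ with $Q$ supersolvable and $A$ a faithful simple $Q$-module that is not $\mathbb{Z}$-cyclic. Then $G$ is solvable but not supersolvable, since the chief factor $A$ is non-cyclic. For any nontrivial $N\trianglelefteq G$, either $N\cap A\neq 1$ and then $Q$-simplicity of $A$ forces $A\subseteq N$, so $G/N$ is a quotient of the supersolvable $Q$; or $N\cap A=1$, in which case $[N,A]\subseteq N\cap A=1$ gives $N\subseteq C_G(A)=A$ and hence $N=1$. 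Every proper quotient is thus supersolvable, completing the characterization.
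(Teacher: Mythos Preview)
The paper does not supply a proof of Theorem~C; it is quoted from Robinson--Wilson \cite{15} and used as a black box in the proof of Theorem~1.1 (and there only the splitting $G\cong N\rtimes M$ with $N=F(G)$ elementary abelian non-cyclic and $M$ supersolvable acting faithfully is actually invoked). So there is no proof in the paper to compare yours against.

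Your outline is the standard route and is correct in every step but one: the conjugacy of complements. The sentence ``a vanishing-$H^1(Q,A)$ argument exploiting the faithful simple $Q$-module structure of $A$, which ensures that the $1$-cocycles \ldots\ are all coboundaries'' is an assertion, not a proof; faithfulness and irreducibility of $A$ do not by themselves force $H^1(Q,A)=0$, and Gasch\"utz's theorem as usually stated reduces \emph{existence} of a complement to the Sylow $p$-subgroup but does not hand you conjugacy for free. What you have actually established is that $G$ is a \emph{primitive} solvable group ($\Phi(G)=1$, a unique minimal normal subgroup $A=F(G)$, and $C_G(A)=A$), and for such groups it is a classical theorem that all complements of the socle are conjugate (see, e.g., Doerk--Hawkes, \emph{Finite Soluble Groups}, Chap.~A, \S15). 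That result needs a genuine argument beyond what you have sketched; citing it would close the gap. The remainder of the forward direction and your converse paragraph are fine.
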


\begin{thmy}
Let $G$ be a finite group, $\pi_e(G)$ be the set of element orders of $G$ and $n_d(G)$ {\rm($s_d(G)$)} be the number of elements {\rm(}subgroups{\rm)} of order $d$ in $G$, $\forall\, d\in\mathbb{N}$. Then the following statements hold:
\begin{itemize}
\item[{\rm 1)}]{\rm (T.J. Laffey \cite{12})} If $p$ is a prime divisor of $|G|$ and $G$ is not a $p$-group, then
\begin{equation}
n_p(G)\leq\frac{p}{p+1}\,|G|-1.\nonumber
\end{equation}
\item[{\rm 2)}]{\rm (T.J. Laffey \cite{13})} If $G$ is a $3$-group and $\exp(G)\neq 3$, then
\begin{equation}
n_3(G)\leq\frac{7}{9}\,|G|-1.\nonumber
\end{equation}
\item[{\rm 3)}]{\rm (R. Brandl and W. Shi \cite{5})} If $\pi_e(G)=\{1,2,3\}$, then $G\cong K\rtimes H$ is a Frobenius group with kernel $K$ and complement $H$, where either $K\cong C_3^m$ and $H\cong C_2$, or $K\cong C_2^{2m}$ and $H\cong C_3$.
\item[{\rm 4)}]{\rm (M. T\u arn\u auceanu \cite{17})} If $G$ is a $2$-group of order $2^n$ and $\exp(G)\neq 2$, then
\begin{equation}
s_{2^k}(G)\leq s_{2^k}(D_8\times C_2^{n-3}),\, \forall\, k=0,1,\ldots,n.\nonumber
\end{equation}
\end{itemize}
\end{thmy}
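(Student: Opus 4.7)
The statement aggregates four cited results rather than presenting a single claim, so I sketch the approach for each piece.

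Part 1 rests on Frobenius's theorem that $|\{x \in G : x^p = 1\}| = n_p(G)+1$ is divisible by $p$. Since $G$ is not a $p$-group, pick a prime $q \neq p$ dividing $|G|$ and a Sylow $q$-subgroup $Q$. The plan is to show, coset-by-coset, that the fraction of elements of $gQ$ satisfying $x^p = 1$ is at most $p/(p+1)$: within each coset one extracts enough elements of order divisible by $q$ via a pigeonhole/averaging over $Q$-orbits. Summing over cosets gives $n_p(G)+1 \leq \frac{p}{p+1}|G|$, and the divisibility of $n_p(G)+1$ by $p$ together with integrality yields the additional $-1$. Part 2 is a refinement for $p=3$, proved by induction on $|G|$: take a maximal subgroup $M$ of index $3$ and split cases on $\exp(M)$. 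If $\exp(M)=3$, then $M$ contributes $|M|-1$ elements of order dividing $3$ while each of the two non-trivial cosets contributes at most $2|M|/3$ such elements (any coset of $M$ containing an element of order $9$ cannot be saturated by $3$-torsion); if $\exp(M)>3$, apply induction to $M$ and bound the contribution of $G\setminus M$ by a Frobenius-style counting.

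Part 3 is essentially classical: $\pi_e(G) = \{1,2,3\}$ forces $G$ of exponent $6$ with no element of order $6$, so no involution centralizes any $3$-element. Hence the centralizer of every involution is a $2$-group and of every order-$3$ element a $3$-group. Burnside's normal $p$-complement theorem (applied to whichever Sylow subgroup is self-centralizing) produces a normal Hall subgroup $K$, giving $G=K\rtimes H$ with $H$ acting fixed-point-freely on $K$, i.e., Frobenius. The prime-exponent condition forces $K$ elementary abelian of the appropriate prime, and $H$ is cyclic as a Frobenius complement of prime order $2$ or $3$; the exponent restriction fixes the two possibilities.

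Part 4 is the technical core, where I would induct on $n$: for $n=3$, directly compare $D_8$ with the only other non-abelian/exponent-$>2$ option $Q_8$ and with $C_4\times C_2$. For $n>3$, choose a central involution $z\in Z(G)$ and partition subgroups of order $2^k$ by whether they contain $z$: those containing $z$ biject with subgroups of order $2^{k-1}$ in $G/\langle z\rangle$, while those not containing $z$ inject into subgroups of order $2^k$ in $G/\langle z\rangle$. Applying the inductive hypothesis to $G/\langle z\rangle$ when it still has exponent $>2$ yields the desired inequality, and the edge case $\exp(G/\langle z\rangle)=2$ forces $G$ into a short list of structures (generalized dihedral of exponent $4$, etc.) for which the inequality can be checked case by case. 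The main obstacle is precisely this last edge-case analysis: one must verify that the inductive bookkeeping reproduces the exact count $s_{2^k}(D_8\times C_2^{n-3})$ uniformly in $k$, and handle the quotients that collapse to exponent $2$ without losing the comparison.
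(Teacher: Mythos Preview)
The paper does not prove Theorem~D at all: it is stated purely as a compilation of four results quoted from the literature (Laffey \cite{12,13}, Brandl--Shi \cite{5}, T\u arn\u auceanu \cite{17}), with no argument given in the paper itself. So there is no ``paper's own proof'' for your sketches to be compared against; the intended proof is simply the citation.

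That said, a brief comment on your sketches as free-standing arguments. Parts~2 and~3 are broadly in the right spirit. In Part~3, be careful that Burnside's transfer theorem requires a Sylow subgroup in the \emph{center} of its normalizer, not merely self-centralizing; here the Sylow $2$-subgroup is elementary abelian, so $N_G(P)/C_G(P)$ would need to be a $2'$-group of odd order acting on it, which needs an extra step. Part~1 is too loose: the assertion that each coset $gQ$ has at most a $p/(p+1)$ fraction of solutions to $x^p=1$ is exactly the heart of Laffey's argument and is not obvious from pigeonhole alone; Laffey's actual proof uses a Hall--Frobenius-type counting together with a delicate analysis of how solutions distribute over cosets of a carefully chosen subgroup, and the ``$-1$'' does not drop out of divisibility in the way you suggest. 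Part~4 is the most problematic: your inductive bookkeeping via a central involution counts subgroups of $G/\langle z\rangle$, but the map from subgroups of $G$ not containing $z$ to subgroups of $G/\langle z\rangle$ is not injective in general (distinct subgroups can have the same image), so the inequality you need does not follow; the proof in \cite{17} proceeds quite differently, via an explicit formula for $s_{2^k}(D_8\times C_2^{n-3})$ and a structural analysis of $2$-groups with many involutions.
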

\smallskip

We will also use the next two basic properties of the function $o(G)$:

\begin{itemize}
\item[] - it is multiplicative, that is if $(G_i)_{i=\overline{1,k}}$ are finite groups of coprime orders, then
\begin{equation}
o(\prod_{i=1}^kG_i)=\prod_{i=1}^ko(G_i);
\end{equation}
\item[] - if $G$ is a finite group and $X$ is a non-trivial normal subgroup of $G$, then
\begin{equation}
o\left(\frac{G}{X}\right)<o(G).
\end{equation}
\end{itemize}

Most of our notation is standard and will usually not be repeated here. Elementary notions and results on groups can be found in \cite{9,16}.

\section{Proofs of the main results}

We start with the following easy but important lemma.

\begin{lemma}
Let $G$ be a finite group, $1=d_1<d_2<\ldots <d_r$ be the element orders of $G$ and $n_{d_i}(G)$ be the number of elements of order $d_i$ in $G$, $\forall\, i= 1,2,\ldots,r$. Assume that $r\geq 3$ and take a positive integer $s$ with $3\leq s\leq r$. If $o(G)<c$, where $c>0$ is a real number, then
\begin{equation}
n_{d_{s-1}}(G)>\frac{d_s-c}{d_s-d_{s-1}}\,|G|-\sum_{i=1}^{s-2}\frac{d_s-d_i}{d_s-d_{s-1}}\,n_{d_i}(G).
\end{equation}
\end{lemma}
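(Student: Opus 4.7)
The plan is to translate the hypothesis $o(G)<c$ into an inequality on $\psi(G) = \sum_{i=1}^{r} d_i\, n_{d_i}(G)$, split the sum at index $s-1$, and bound the tail from below using the coarsest possible replacement, namely $d_i \geq d_s$ for every $i \geq s$.

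Concretely, I would start from
\begin{equation}
\psi(G) = \sum_{i=1}^{s-1} d_i\, n_{d_i}(G) + \sum_{i=s}^{r} d_i\, n_{d_i}(G) \;\geq\; \sum_{i=1}^{s-1} d_i\, n_{d_i}(G) + d_s \sum_{i=s}^{r} n_{d_i}(G).
\end{equation}
Then use the obvious identity $\sum_{i=1}^{r} n_{d_i}(G) = |G|$, which gives $\sum_{i=s}^{r} n_{d_i}(G) = |G| - \sum_{i=1}^{s-1} n_{d_i}(G)$, so that
\begin{equation}
\psi(G) \;\geq\; d_s\,|G| - \sum_{i=1}^{s-1} (d_s - d_i)\, n_{d_i}(G).
\end{equation}

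Combining this with $\psi(G) = o(G)\,|G| < c\,|G|$ gives
\begin{equation}
c\,|G| \;>\; d_s\,|G| - (d_s - d_{s-1})\, n_{d_{s-1}}(G) - \sum_{i=1}^{s-2} (d_s - d_i)\, n_{d_i}(G),
\end{equation}
and isolating $n_{d_{s-1}}(G)$ (legitimate because $d_s - d_{s-1} > 0$, since the $d_i$ are strictly increasing) yields precisely the claimed inequality. The only tiny things to notice along the way are that $r \geq 3$ and $s \geq 3$ are needed so that the index $s-1 \geq 2$ is a genuine order distinct from $d_1 = 1$, and that the displayed sum $\sum_{i=1}^{s-2}$ is empty (hence zero) exactly when $s = 3$, which is consistent.

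There is no real obstacle here: the only substantive step is the lower bound $d_i \geq d_s$ for $i \geq s$, and everything else is bookkeeping. I would present the argument as a single three-line computation preceded by a one-sentence statement that the estimate is obtained by replacing each order $d_i$ with $d_s$ for $i \geq s$ in the definition of $\psi(G)$.
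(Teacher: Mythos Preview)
Your argument is correct and is essentially identical to the paper's: bound $\psi(G)$ below by replacing every $d_i$ with $d_s$ for $i\geq s$, use $\sum_i n_{d_i}(G)=|G|$, apply $\psi(G)<c\,|G|$, and solve for $n_{d_{s-1}}(G)$. One small slip in your closing aside: when $s=3$ the sum $\sum_{i=1}^{s-2}$ runs from $i=1$ to $1$ and so contributes the single term $\dfrac{d_s-1}{d_s-d_{s-1}}\cdot n_{d_1}(G)=\dfrac{d_s-1}{d_s-d_{s-1}}$, not zero (this is exactly the constant $\tfrac{3}{2}$, $2$, etc.\ that appears in the paper's applications of the lemma).
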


\begin{proof}
We have $|G|=1{+}n_{d_2}(G){+}\ldots{+}n_{d_s}(G){+}\ldots{+}n_{d_r}(G)$, so we deduce that
\begin{align*}
\psi(G)&=1{+}d_2n_{d_2}(G){+}\ldots{+}d_sn_{d_s}(G){+}\ldots{+}d_rn_{d_r}(G)\\
&\geq 1{+}d_2n_{d_2}(G){+}\ldots{+}d_{s-1}n_{d_{s-1}}(G){+}d_s(n_{d_s}(G){+}\ldots{+}n_{d_r})(G)\\
&=1{+}d_2n_{d_2}(G){+}\ldots{+}d_{s-1}n_{d_{s-1}}(G){+}d_s(|G|{-}1{-}n_{d_2}(G){-}\ldots{-}n_{d_{s-1}}(G)).
\end{align*}Since $o(G)<c$, it follows that $\psi(G)<c|G|$. Therefore we have
$$c|G|>1{+}d_2n_{d_2}(G){+}\ldots{+}d_{s-1}n_{d_{s-1}}(G){+}d_s(|G|{-}1{-}n_{d_2}(G){-}\ldots{-}n_{d_{s-1}}(G)).$$
Clearly, the last inequality is equivalent to (3), completing the proof.
\end{proof}

Our second lemma collects information about some particular classes of finite groups $G$ satisfying $o(G)<\frac{31}{12}$\,.

\begin{lemma}
Given a finite group $G$ such that $o(G)<\frac{31}{12}$\,, the following statements hold:
\begin{itemize}
\item[{\rm a)}] If $|G|$ is odd, then $G\cong C_3$.
\item[{\rm b)}] If $G$ is abelian, then either $G\cong C_3$ or $G$ is an elementary abelian $2$-group.
\item[{\rm c)}] If $G$ is a $2$-group, then $G'=\Phi(G)$ and $n_2(G)>\frac{17}{24}\,|G|-\frac{3}{2}$\,.
\item[{\rm d)}] If $G$ is supersolvable of even order but not a $2$-group, then $G\cong K\rtimes H$ is a Frobenius group whose kernel $K$ is an elementary abelian $3$-group and the complement $H$ is cyclic of order $2$; moreover, in this case we have\newpage
\begin{equation}
o(G)=\frac{5\cdot 3^m-2}{2\cdot 3^m}\,,\nonumber
\end{equation}where $|K|=3^m$.
\end{itemize}
\end{lemma}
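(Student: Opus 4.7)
The approach is to dispatch (a)--(c) quickly using Lemma~2.1 with the multiplicativity (1) and monotonicity (2) of $o$, and to invest the bulk of the work in (d). For (a), every non-identity element of an odd-order group has order $\geq 3$, so $\psi(G) \geq 1 + 3(|G|-1)$, giving $o(G) \geq 3 - 2/|G|$; the hypothesis $o(G) < 31/12$ then forces $|G| < 24/5$, hence $G \cong C_3$. For (b), decompose abelian $G = G_2 \times G_{2'}$ and combine (1) with (a): if $G_{2'} \cong C_3$ and $G_2 \neq 1$ then $o(G) \geq (3/2)(7/3) = 7/2 > 31/12$, so either $G \cong C_3$ or $G$ is an abelian $2$-group; $\exp(G) \geq 4$ would make $C_4$ a quotient and force $o(G) \geq 11/4 > 31/12$ via (2), so $G$ must be elementary abelian. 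For (c), $G/G'$ is an abelian $2$-group quotient with $o(G/G') < 31/12$, so by (b) it is elementary abelian, giving $G^2 \leq G'$ and $\Phi(G) = G'G^2 = G'$; the $n_2$ bound is trivial when $G$ is elementary abelian, and otherwise $\exp(G) \geq 4$ makes $d_3 = 4$ in the list of element orders, whence Lemma~2.1 with $s = 3$ and $c = 31/12$ yields exactly $n_2(G) > (17/24)|G| - 3/2$.

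For (d), the strategy is to show $\pi_e(G) = \{1,2,3\}$ and then to apply Theorem~D(3) (Brandl--Shi); the non-supersolvable Frobenius family $C_2^{2m} \rtimes C_3$ (whose smallest instance is $A_4$, the $C_3$-action on $C_2^{2m}$ being irreducible) is excluded by supersolvability. Since $|G|$ is even, $d_2 = 2$. Lemma~2.1 with $s = 3$ combined with Laffey's bound $n_2(G) \leq (2/3)|G| - 1$ from Theorem~D(1) gives $|G| < 12/(4d_3 - 15)$ whenever $d_3 \geq 4$; since $G$ is not a $2$-group, $|G|$ has an odd prime divisor, and together with $d_3 \mid |G|$ and $|G|$ even this is inconsistent: $d_3 = 4$ forces $3 \nmid |G|$ and an odd prime $\geq 5$ in $\pi(G)$, so $|G| \geq 20$, contradicting $|G| < 12$; while $d_3 \geq 5$ already gives $|G| \geq 2d_3 \geq 10 > 12/5$. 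Hence $d_3 = 3$.

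To eliminate elements of order $\geq 4$, use that supersolvability places $2$ at the bottom of the Sylow tower, so the Hall $\{2\}'$-subgroup $N$ of $G$ is normal and $G = N \rtimes P$ with $P$ a Sylow $2$-subgroup. First rule out $G/G' \cong C_3$ (the other option in (b) applied to the abelian $G/G'$): if it held, then $G'$ nilpotent of index $3$ would make the Sylow $2$-subgroup $P_2$ characteristic in $G'$ and hence normal in $G$; applying (a) to the odd quotient $G/P_2$ forces $G/P_2 \cong C_3$, so $G' = P_2$ and $G = P_2 \rtimes C_3$. A stability argument on the chief factors of $P_2$ (each $\cong C_2$ with trivial $\mathrm{Aut}$) makes the $C_3$-action trivial, giving $G = P_2 \times C_3$ and $G' = P_2'$, so $P_2 = P_2'$, impossible for a nontrivial nilpotent $P_2$. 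Therefore $G/G'$ is elementary abelian $2$-group, $N \leq G'$, and $N$ is nilpotent (as a subgroup of the nilpotent $G'$). Every element of $G \setminus N$ has order divisible by the order of its image in $P$, so $\psi(G) \geq \psi(N) + |N|(\psi(P) - 1)$, giving $o(N) < |P|(31/12 - o(P)) + 1$. Combined with the multiplicativity $o(N) = \prod_p o(P_p^N)$, the bounds $o(C_p) \geq 21/5$ for primes $p \geq 5$ and $o(C_9) = 61/9 > 19/6$, and the constraints on $2$-groups with $o(P) < 31/12$ furnished by~(c), this pins down $|P| = 2$ and $N \cong C_3^m$ elementary abelian (for $|P| \geq 4$, a more detailed count of element orders in $G = N \rtimes P$ drives $o(G)$ above $31/12$). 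Writing the involution $h$ of $P = C_2$ as a linear automorphism of $N \cong \mathbb{F}_3^m$ and decomposing $N$ into $\pm 1$-eigenspaces of $h$, one checks that any non-Frobenius action creates elements of order $6$ and pushes $o(G)$ above $31/12$; hence the action is fixed-point-free, $G \cong C_3^m \rtimes C_2$ is the claimed Frobenius group, and the formula $o(G) = (5 \cdot 3^m - 2)/(2 \cdot 3^m)$ follows by counting $1$ identity, $3^m - 1$ elements of order $3$, and $3^m$ elements of order $2$.

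The main obstacle is this final compound step in (d): once $d_3 = 3$ is pinned down, Lemma~2.1 and Laffey's bounds alone are not sharp enough to exclude mixed element orders such as $4$, $6$, or $9$, and the essential leverage comes from combining the $N \rtimes P$ decomposition with the nilpotence of $N$, multiplicativity of $o$ on its Sylow factors, and the rigidity of $C_2$-actions on nilpotent odd-order groups.
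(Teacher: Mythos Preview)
Parts (a)--(c) are fine and essentially match the paper, with a pleasant shortcut in (b): instead of counting involutions in an abelian $2$-group via Lemma~2.1, you note that any abelian $2$-group of exponent $\geq 4$ has $C_4$ as a quotient, whence $o(G)\geq o(C_4)=11/4>31/12$ by~(2).

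In (d), however, there are two genuine gaps.

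\emph{First gap: reducing to $|P|=2$.} Your inequality $o(N)<|P|\bigl(\tfrac{31}{12}-o(P)\bigr)+1$ is correct, but for $|P|\geq 4$ it is too weak: e.g.\ for $P\cong C_2^2$ it only gives $o(N)<13/3$, and for $P\cong C_2^3$ it gives $o(N)<20/3$, neither of which excludes $N$ a $3$-group. You acknowledge this by writing ``a more detailed count of element orders in $G=N\rtimes P$ drives $o(G)$ above $31/12$'', but no such count is supplied, and it is not a routine verification. The paper handles this completely differently: it passes to the quotient $G/H_1\cong K\rtimes C_2^q$ (where $H_1$ is the Sylow $2$-part of $G'$), then iteratively kills the odd part until one reaches a quotient $C_{p_r}\rtimes C_2^q$; a direct check on this small group forces $q=1$ and $p_r=3$, and a separate maximal-subgroup argument then shows $H$ is cyclic, hence $H\cong C_2$ since $o(C_{2^t})>31/12$ for $t\geq 2$.

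\emph{Second gap: forcing $\exp(N)=3$.} Even granting $|P|=2$ and $N$ a $3$-group, your bound $o(N)<19/6$ together with ``$o(C_9)=61/9>19/6$'' does not yield $\exp(N)=3$: a $3$-group of exponent $9$ need not have $C_9$ as a quotient (the extraspecial group of order $27$ and exponent $9$ has abelianisation $C_3\times C_3$), so monotonicity under quotients cannot be invoked. What actually works here is Theorem~D(2) (Laffey's bound $n_3(N)\leq\tfrac{7}{9}|N|-1$ for $3$-groups of exponent $>3$): it gives $n_{\geq 9}(N)\geq\tfrac{2}{9}|N|$, hence $o(N)\geq\tfrac{13}{3}-\tfrac{2}{|N|}>19/6$, the desired contradiction. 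The paper uses exactly this Laffey bound, but applied inside $G$ via Lemma~2.1 with $s=4$ after first establishing $n_2(G)=3^m$ (so that $G$ has no elements of order $6$).

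Your opening manoeuvre in (d) --- combining Lemma~2.1 at $s=3$ with Theorem~D(1) to force $d_3=3$ directly --- is a nice alternative to the paper's quotient-chasing, and the eigenspace computation at the end is the right idea for the Frobenius conclusion. But the two intermediate steps above are where the argument currently breaks, and filling them seems to require either the paper's quotient reduction or an explicit invocation of Theorem~D(2).
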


\begin{proof}
\begin{itemize}
\item[{\rm a)}] It suffices to observe that if $|G|\geq 5$, then
\begin{equation}
o(G)\geq\frac{1+3(|G|-1)}{|G|}>\frac{31}{12}\,.\nonumber
\end{equation}
\item[{\rm b)}] Let $G=G_1\times G_2\times\cdots\times G_k$, where $G_i$ is an abelian $p_i$-group, $\forall\, i=1,2,\ldots,k$. By using (1), it follows that if $k\geq 2$, then
\begin{equation}
o(G)=\prod_{i=1}^k o(G_i)>\frac{31}{12}\,.\nonumber
\end{equation}So, we can assume that $k=1$, i.e. $G$ is an abelian $p$-group. For $p\geq 3$ we get $G\cong C_3$ from a), while for $p=2$ and $\exp(G)\neq 2$ we have
\begin{equation}
n_2(G)>\frac{4-\frac{31}{12}}{4-2}\,|G|-\frac{4-1}{4-2}=\frac{17}{24}\,|G|-\frac{3}{2}\nonumber
\end{equation}by (3). Then
\begin{equation}
|\Omega_1(G)|=n_2(G)+1>\frac{17}{24}\,|G|-\frac{1}{2}>\frac{1}{2}\,|G|.\nonumber
\end{equation}Since $\Omega_1(G)\leq G$, it follows that $\Omega_1(G)=G$, a contradiction. Thus $\exp(G)=2$, i.e. $G$ is an elementary abelian $2$-group.
\item[{\rm c)}] Since $\frac{G}{G'}$ is abelian and $o\!\left(\frac{G}{G'}\right)\leq o(G)<\frac{31}{12}$\,, from b) we deduce that $\frac{G}{G'}$ is elementary abelian and so $G'=\Phi(G)$. Obviously, the inequality $n_2(G)>\frac{17}{24}\,|G|-\frac{3}{2}$ holds if $G$ is elementary abelian. We observe that it also holds if $\exp(G)\neq 2$ by applying (3) for $s=3$.
\item[{\rm d)}] Since $G$ is supersolvable, we have $G\cong K\rtimes H$, where $K$ is the cha\-rac\-te\-ris\-tic subgroup consisting of all elements of odd order in $G$ and $H$ is a Sylow $2$-subgroup of $G$. Also, we know that $G$ has quotients isomorphic to $C_2$, which implies that $2\mid |\frac{G}{G'}|$ and therefore $\frac{G}{G'}\cong C_2^q$, where $q\in\mathbb{N}^*$. This leads to $K\subseteq G'$. Let $H_1$ be a complement of $K$ in $G'$. Then $G'=K\times H_1$ because $G'$ is nilpotent. It follows that $H_1$ is characteristic in $G$ and we have\newpage
\begin{equation}
\frac{G}{H_1}\cong K\rtimes\frac{H}{H_1}\cong K\rtimes C_2^q \mbox{ and } o\!\left(\frac{G}{H_1}\right)\leq o(G)<\frac{31}{12}\,.\nonumber
\end{equation}

\hspace{5mm}Let $\pi(G)=\{2=p_1,p_2,\ldots,p_r\}$ be the set of primes dividing $|G|$, where $p_1<p_2<\ldots<p_r$. By induction on $r$, we infer that $G$ has a quotient of type $K_r\rtimes C_2^q$, where $K_r$ is a Sylow $p_r$-subgroup of $G$. Take $K_{r1}\lhd K_r\rtimes C_2^q$ with $|K_{r1}|=p_r$. Then
\begin{equation}
\frac{K_r\rtimes C_2^q}{K_{r1}}\cong\frac{K_r}{K_{r1}}\rtimes C_2^q \mbox{ and } \left|\frac{K_r}{K_{r1}}\right|<|K_r|.\nonumber
\end{equation}By repeating this process, we get that $G$ has a quotient of type $C_{p_r}\rtimes C_2^q$. Now it is easy to see that the condition $o(C_{p_r}\rtimes C_2^q)<\frac{31}{12}$ implies $q=1$ and $p_r=3$. Thus $H_1$ is a maximal subgroup of $H$. Suppose that $H$ contains a maximal subgroup $M\neq H_1$. Then $KM$ is a normal subgroup of index $2$ in $G$ and so $\frac{G}{KM}\cong C_2$. This shows that $G'\subseteq KM$, which leads to $G'=KM$. Then $M\subseteq G'$ and therefore $H=MH_1\subseteq G'$, a contradiction. Consequently, $H_1$ is the unique maximal subgroup of $H$, i.e. $H$ is cyclic. Moreover, we remark that $H\cong C_2$ because
\begin{equation}
o(C_{2^t})=\frac{2^{2t+1}+1}{3\cdot 2^t}>\frac{31}{12}\,, \forall\, t\geq 2.\nonumber
\end{equation}Hence $G\cong K\rtimes H$, where $|K|=3^m$ and $H\cong C_2$.

\hspace{5mm}Since the first three element orders of $G$ are $d_1=1$, $d_2=2$ and $d_3=3$, by applying (3) for $s=3$ we obtain
\begin{equation}
n_2(G)>\frac{5}{12}\,|G|-2=\frac{5}{6}\,3^m-2.\nonumber
\end{equation}Assume that $n_2(G)\neq 3^m$. Then $n_2(G)\leq 3^{m-1}$ because $n_2(G)\mid 3^m$, which implies that
\begin{equation}
\frac{5}{6}\,3^m-2<3^{m-1}.\nonumber
\end{equation}This leads to $m=1$, i.e. $G\cong S_3$, a contradiction. Thus $n_2(G)=3^m$. We also observe that $G$ cannot have elements of order $6$.

\hspace{5mm}Assume that $\exp(K)\neq 3$. Then $d_4=9$ and from (3) for $s=4$ it follows that\newpage
\begin{align*}
n_3(K)=n_3(G)&>\frac{77}{72}\,|G|-\frac{4}{3}-\frac{7}{6}\,3^m\\
&=\frac{35}{36}\,3^m-\frac{4}{3}\\
&>\frac{7}{9}\,3^m-1,\nonumber
\end{align*}contradicting Theorem D, 2). Consequently, $\exp(K)=3$ and so $\pi_e(G)=\{1,2,3\}$. Thus $G$ is a Frobenius group with kernel $K\cong C_3^m$ and complement $H\cong C_2$ by Theorem D, 3). Also, it is clear that
\begin{equation}
o(G)=\frac{1+2\cdot 3^m+3(3^m-1)}{2\cdot 3^m}=\frac{5\cdot 3^m-2}{2\cdot 3^m}\,,\nonumber
\end{equation}completing the proof.
\end{itemize}
\end{proof}

The following consequence of Lemma 2.2 is immediate.

\begin{corollary}
A finite supersolvable group $G$ satisfying $o(G)<\frac{31}{12}$ is of one of the following types: a $2$-group as in c), $C_3$ or a semidirect product $C_3^m\rtimes C_2$ as in d).
\end{corollary}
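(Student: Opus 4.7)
The plan is to derive this corollary as an immediate trichotomy from the appropriate parts of Lemma 2.2, since all the structural work has already been carried out there. No new computation is required: the only task is to match the hypothesis ``$G$ is supersolvable with $o(G)<\frac{31}{12}$'' to the correct case of the lemma.

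First I would split on the parity of $|G|$. If $|G|$ is odd, then part a) of Lemma 2.2 applies directly and forces $G\cong C_3$, which is the second listed type. If $|G|$ is even, I would further split according to whether $G$ is a $2$-group. If it is, part c) of Lemma 2.2 gives the structural conclusion $G'=\Phi(G)$ together with $n_2(G)>\frac{17}{24}|G|-\frac{3}{2}$, so $G$ falls into the first listed type. If $G$ has even order but is not a $2$-group, then the hypothesis of part d) is met --- here supersolvability of $G$ is essential --- and d) yields exactly that $G$ is a Frobenius group $C_3^m\rtimes C_2$, the third listed type.

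Since each of the three alternatives in the corollary corresponds verbatim to a case of Lemma 2.2, and the three cases (odd order; even order and $2$-group; even order and not a $2$-group) exhaust all supersolvable groups with the given bound on $o(G)$, the proof is essentially a one-line invocation of the lemma. There is no real obstacle; the only mild point worth emphasising is that supersolvability is used in invoking part d), whereas parts a) and c) hold for arbitrary finite groups of odd order or $2$-groups respectively, regardless of supersolvability. I would write the argument as three short paragraphs, one per case, each consisting of a direct citation to the corresponding item of Lemma 2.2.
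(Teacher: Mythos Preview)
Your proposal is correct and matches the paper's own treatment: the paper simply states that the corollary is an immediate consequence of Lemma~2.2, and your trichotomy on (odd order; $2$-group; even order but not a $2$-group) invoking parts a), c), d) respectively is exactly the intended unpacking. Your remark that supersolvability is only needed for case d) is accurate and worth keeping.
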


We are now able to prove our main theorems.

\bigskip\noindent{\bf Proof of Theorem 1.1.} Assume the result is false and let $G$ be a coun\-ter\-exam\-ple of minimal order. Then every proper quotient of $G$ is supersolvable by (2). On the other hand, since
\begin{equation}
o(G)<\frac{31}{12}<\frac{211}{60}\,,\nonumber
\end{equation}Theorem B implies that $G$ is solvable. Thus $G$ is a just non-supersolvable group and its structure is given by Theorem C. More precisely, $G$ has a unique minimal normal subgroup $N\cong C_p^r$, where $p$ is a prime and $r\geq 2$. We also have $\Phi(G)=1$. Indeed, if $\Phi(G)\neq 1$, then $o(\frac{G}{\Phi(G)})<o(G)<\frac{31}{12}$ implies that $\frac{G}{\Phi(G)}$ is supersolvable and so $G$ itself is supersolvable, a contradiction. Since $N\nsubseteq\Phi(G)$, there is a maximal subgroup $M$ of $G$ such that $N\nsubseteq M$. Then $MN=G$ and $M\cap N\lhd G$. By the minimality of $N$, we get $M\cap N=1$, which shows that $M$ is a complement of $N$ in $G$. Moreover, $N$ coincides with the Fitting subgroup of $G$ and the supersolvable subgroup $M$ acts faithfully on $N$. Also, we have
\begin{equation}
o(M)=o(\frac{G}{N})<o(G)<\frac{31}{12}\,.\nonumber
\end{equation}

By Lemma 2.2, a), $|G|=p^r|M|$ is even and therefore we distinguish the following two cases.

\medskip
\hspace{5mm}\noindent{\bf Case 1.} $|M|$ is odd\vspace{2mm}

Then $M\cong C_3$ and $p=2$, that is $G=C_2^r\rtimes C_3$. Since $r\geq 2$, we get
\begin{equation}
o(G)=\frac{1+2(2^r-1)+3\cdot 2^{r+1}}{3\cdot 2^r}=\frac{2^{r+3}-1}{3\cdot 2^r}\geq\frac{31}{12}\,,\nonumber
\end{equation}a contradiction.

\medskip
\hspace{5mm}\noindent{\bf Case 2.} $|M|$ is even\vspace{2mm}

We have the next two subcases.

\medskip
\hspace{10mm}\noindent{\bf Subcase 2.1.} $M$ is a $2$-group\vspace{2mm}

Let $|M|=2^q$. Since $G$ is not supersolvable, it is not strictly $2$-closed and therefore $M\not\cong C_2^q$. We also remark that $p$ must be odd.

If $p\geq 5$, then the first three element orders of $G$ are $1$, $2$ and $4$. By applying (3) for $s=3$, it follows that $n_2(G)>\frac{17}{24}\,|G|-\frac{3}{2}$\,. On the other hand, Theorem D, 1), shows that $n_2(G)\leq\frac{2}{3}\,|G|-1$. Thus
\begin{equation}
\frac{17}{24}\,|G|-\frac{3}{2}<\frac{2}{3}\,|G|-1,\nonumber
\end{equation}i.e. $|G|<12$, contradicting the fact that $G$ is not supersolvable.

If $p=3$, then the first four element orders of $G$ are $1$, $2$, $3$, $4$, and the inequality (3) for $s=4$ leads to
\begin{equation}
n_3(G)>\frac{17}{12}\,|G|-3-2n_2(G).\nonumber
\end{equation}But $n_3(G)=n_3(N)=3^r-1$ and therefore
\begin{equation}
n_2(G)>\frac{17}{24}\,|G|-\frac{3^r}{2}-1.\nonumber
\end{equation}So, we have
\begin{equation}
\frac{17}{24}\,|G|-\frac{3^r}{2}-1<\frac{2}{3}\,|G|-1,\nonumber
\end{equation}i.e. $q\in\{1,2,3\}$, and the condition $o(M)<\frac{31}{12}$ implies that $M=D_8$. It is now easy to check that $\pi_e(G)=\{1,2,3,4\}$ and $n_1(G)=1$, $n_2(G)=5\cdot 3^r$, $n_3(G)=3^r-1$, $n_4(G)=2\cdot 3^r$. Thus
\begin{equation}
o(G)=\frac{1+10\cdot 3^r+3(3^r-1)+8\cdot 3^r}{8\cdot 3^r}=\frac{21\cdot 3^r-2}{8\cdot 3^r}>\frac{31}{12}\,,\nonumber
\end{equation}a contradiction.

\medskip
\hspace{10mm}\noindent{\bf Subcase 2.2.}  $M$ is not a $2$-group\vspace{2mm}

Then the structure of $M$ is given by Lemma 2.2, d), namely $M\cong K\rtimes H$ is a Frobenius group with kernel $K\cong C_3^m$ and complement $H\cong C_2$. Also, we observe that $p\neq 3$. Indeed, if $p=3$, then $G$ is strictly $2$-closed and consequently supersolvable, a contradiction.

Assume first that $p\geq 5$. The the inequality (3) for $s=3$ becomes
\begin{equation}
n_2(G)>\frac{5}{12}\,|G|-2.\nonumber
\end{equation}On the other hand, $n_2(G)$ is the number of Sylow $2$-subgroups of $G$ and therefore
\begin{equation}
n_2(G)\mid p^r3^m=\frac{|G|}{2}\,.\nonumber
\end{equation}If $n_2(G)\neq\frac{|G|}{2}$\,, then $n_2(G)\leq\frac{|G|}{6}$, implying that
\begin{equation}
\frac{5}{12}\,|G|-2<\frac{|G|}{6}\,.\nonumber
\end{equation}This leads to $|G|<8$, a contradiction. Thus $n_2(G)=\frac{|G|}{2}$\,.

As in the proof of Lemma 2.2, d), $G'=NK$ is the unique subgroup of index $2$ in $G$. Clearly, all $3$-elements of $G$ are contained in $G'$ and so we have
\begin{equation}
n_3(G)=n_3(G')\leq\frac{3}{4}\,|G'|-1=\frac{3}{8}\,|G|-1\nonumber
\end{equation}by Theorem D, 1). Since the first four element orders of $G$ are $1$, $2$, $3$ and $p$, from (3) with $s=4$ we obtain
\begin{align*}
n_3(G)&>\frac{p-\frac{31}{12}}{p-3}\,|G|-\frac{p-1}{p-3}-\frac{p-2}{p-3}\,n_2(G)\\
&=\frac{6p-19}{12(p-3)}\,|G|-\frac{p-1}{p-3}\,.\nonumber
\end{align*}Thus
\begin{equation}
\frac{6p-19}{12(p-3)}\,|G|-\frac{p-1}{p-3}<\frac{3}{8}\,|G|-1,\nonumber
\end{equation}which is equivalent to $|G|<12$, a contradiction.

Assume next that $p=2$. Then $G$ possesses exactly $3^m$ Sylow $2$-subgroups. If $S\cong C_2^r\rtimes C_2$ is one of them, then
\begin{equation}
s_1(S)\leq s_1(D_8\times C_2^{r-2})=3\cdot 2^{r-1}-1\nonumber
\end{equation}by Theorem D, 4), and so $S$ has at least
\begin{equation}
2^{r+1}-1-(3\cdot 2^{r-1}-1)=2^{r-1}\nonumber
\end{equation}elements of order $\geq 4$. This implies that $G$ has at least
$2^{r-1}3^m$ elements of order $\geq 4$ and at most
\begin{equation}
2^{r+1}3^m-1-2^r(3^m-1)-2^{r-1}3^m=2^{r-1}3^m+2^r-1\nonumber
\end{equation}elements of order $2$. Similarly with the case $p\geq 5$, we get
\begin{equation}
n_3(G)\leq\frac{3}{8}\,|G|-1\nonumber
\end{equation}and since the first four element orders of $G$ are $1$, $2$, $3$, $4$, from (3) with $s=4$ we get
\begin{equation}
n_3(G)>\frac{17}{12}\,|G|-3-2n_2(G).\nonumber
\end{equation}Thus
\begin{equation}
\frac{17}{12}\,|G|-3-2n_2(G)<\frac{3}{8}\,|G|-1,\nonumber
\end{equation}implying that
\begin{equation}
n_2(G)>\frac{25}{48}\,|G|-1.\nonumber
\end{equation}Consequently, one obtains
\begin{equation}
\frac{25}{48}\,|G|-1<2^{r-1}3^m+2^r-1,\nonumber
\end{equation}which means $3^m<\frac{24}{13}$\,, a contradiction.\footnote{Note that the smallest example of such a finite group is $S_4$.}
\bigskip

Finally, we prove that if for a finite group $G$ we have $o(G)=\frac{31}{12}$\,, then $G\cong A_4$. We observe first that $G$ is solvable and all its proper quotients are su\-per\-sol\-va\-ble. Also, the equality $12\psi(G)=31|G|$ implies that both $2$ and $3$ divide $|G|$. Let $1=d_1<d_2<\ldots<d_r$ be the element orders of $G$. Then
\begin{equation}
\psi(G)=\sum_{i=1}^r d_in_{d_i}(G)=\sum_{i=1}^r d_i\varphi(d_i)n_{d_i}'(G),\nonumber
\end{equation}where $n_{d_i}'(G)$ denotes the number of cyclic subgroups of order $d_i$ in $G$, $\forall\, i=1,2,\ldots,r$. Since $\varphi(d_i)$ is even for all $d_i>2$, we infer that $\psi(G)\equiv\, 1\, ({\rm mod}\, 2)$. This shows that $|G|=4n$, where $n$ is odd and divisible by $3$.

If $G$ is supersolvable, then we get $G\cong G'\rtimes C_2^2$, as in the proof of Lemma 2.2, d). It follows that $G$ has a quotient $G_1$ of order $12$. Then $o(G_1)\leq\frac{31}{12}$\,, contradicting the fact that $o(X)>\frac{31}{12}$ for all supersolvable groups $X$ of order $12$.

If $G$ is not supersolvable, then it is a just non-supersolvable group and therefore $G\cong N\rtimes M$ with $M$ and $N$ as above. It is easy to see that the unique possibility to have $o(G)=\frac{31}{12}$ appears in {\bf Case 1} for $r=2$, i.e. for $G\cong C_2^2\rtimes C_3\cong A_4$, as desired. $\hspace{82mm}\qed$

\bigskip\noindent{\bf Proof of Theorem 1.2.} It follows immediately from Theorem 1.1 and Corollary 2.3.$\hspace{105mm}\qed$

\vspace*{3ex}\small

\hfill
\begin{minipage}[t]{5cm}
Marius T\u arn\u auceanu \\
Faculty of  Mathematics \\
``Al.I. Cuza'' University \\
Ia\c si, Romania \\
e-mail: {\tt tarnauc@uaic.ro}
\end{minipage}

\end{document}